\renewcommand{\epsilon}{\varepsilon}
\newcommand{\newsection}[1]
{\subsection{#1}\setcounter{theorem}{0} \setcounter{equation}{0}
\par\noindent}
\newtheorem{theorem}{Theorem}
\newtheorem{lemma}[theorem]{Lemma}
\newtheorem{proposition}[theorem]{Proposition}
\theoremstyle{definition}
\newtheorem{deff}[theorem]{Definition}
\theoremstyle{remark}
\newtheorem{remark}[theorem]{Remark}
\newcommand{\supp}{\operatorname{supp}}
\renewcommand{\epsilon}{\varepsilon}
\newcommand{\R}{{\mathbb R}}
\newcommand{\Z}{{\mathbb Z}}
\newcommand{\tgamma}{\tilde \gamma}
\newcommand{\bfk}{\mathbf k}
\newcommand{\lap}[1]{\sqrt{-\Delta_{#1}}}
\begin{document}


\title[Integrals of Eigenfunctions over Curves]
{Integrals of eigenfunctions over curves in surfaces of nonpositive curvature}

	\author{Emmett L. Wyman}
	\address{Department of Mathematics,  Johns Hopkins University,
	Baltimore, MD 21218}

\begin{abstract}  Let $(M,g)$ be a compact, 2-dimensional Riemannian manifold with nonpositive sectional curvature. Let $\Delta_g$ be the Laplace-Beltrami operator corresponding to the metric $g$ on $M$, and let $e_\lambda$ be $L^2$-normalized eigenfunctions of $\Delta_g$ with eigenvalue $\lambda$, i.e.
\[
-\Delta_g e_\lambda = \lambda^2 e_\lambda.
\]
We prove
\[
\left| \int_\R b(t) e_\lambda (\gamma(t)) \, dt \right| = o(1) \quad \text{ as } \lambda \to \infty
\]
where $b$ is a smooth, compactly supported function on $\R$ and $\gamma$ is a curve parametrized by arc-length whose geodesic curvature $\kappa(\gamma(t))$ avoids two critical curvatures $\mathbf k(\gamma'^\perp(t))$ and $\mathbf k(-\gamma'^{\perp}(t))$ for each $t \in \supp b$. $\bfk(v)$ denotes the curvature of a circle with center taken to infinity along the geodesic ray in direction $-v$.

Chen and Sogge prove in ~\cite{ChenSogge} the same decay for geodesics in $M$ with strictly negative curvature. After performing a standard reduction, they lift the relevant quantity to the universal cover and then use the Hadamard parametrix to reduce the problem to bounding a sum of oscillatory integrals with a geometric phase functions. They use the Gauss-Bonnet theorem to obtain bounds on the Hessian of these phase functions and conclude their argument with stationary phase. Our argument follows theirs, except we prove and use properties of the curvature of geodesic circles to obtain bounds on the Hessian of the phase functions.
\end{abstract}

\maketitle

\newsection{Statement of results}


Let $(M,g)$ be a 2-dimensional compact Riemannian manifold. We denote by $e_\lambda$ an $L^2$-normalized eigenfunction of the Laplace-Beltrami operator $\Delta_g$ on $M$, i.e. $-\Delta_g e_\lambda = \lambda^2 e_\lambda$ and $\| e_\lambda \|_{L^2(M)} = 1$. We are interested in restrictions of eigenfunctions to curves in $M$, in particular with the integral
\begin{equation} \label{intro1}
    \int b(t) e_\lambda(\gamma(t)) \, dt
\end{equation}
where $b$ is a smooth, compactly supported function on $\R$ and $\gamma$ is a smooth unit-speed curve in $M$. In the setting that $M$ is a hyperbolic surface and $\gamma$ is a closed geodesic, Good ~\cite{Good} and Hejhal ~\cite{Hejhal} showed that
\[
    \int_\gamma e_\lambda \, dt = O(1).
\]
Later Reznikov ~\cite{Rez} demonstrated the same bound can be achieved if $\gamma$ is allowed to be a circle in $M$. For $M$ of arbitrary dimension, Zelditch ~\cite{ZelK} shows, among other things, period integrals over submanifolds of codimension $k$ are $O(\lambda^{\frac{k-1}{2}})$, implying the $O(1)$ bound above.

In the setting where $M$ has negative sectional curvature, Chen and Sogge ~\cite{ChenSogge} obtained decay
\begin{equation} \label{little o}
    \int b(s) e_\lambda(\gamma(s)) \, ds = o(1)
\end{equation}
where $\gamma$ is a geodesic in $M$. Moreover, they showed that decay cannot be guaranteed if $M$ is replace with a sphere or a torus, demonstrating the necessity of negative sectional curvature. In the case of the sphere, the bound is saturated by the zonal functions along the equator. In the case of the torus, for any closed geodesic $\gamma$ there exists a sequence of eigenfunctions which are uniformly constant on $\gamma$. Sogge, Xi, and Zhang ~\cite{SXZ} later improved this result by slightly weakening the hypotheses on the curvature of $M$ and obtaining an explicit decay of $O((\log \lambda)^{-1/2})$.

Our main result builds on the work of Chen and Sogge ~\cite{ChenSogge} and shows that their bound \eqref{little o} holds for integrals over $\gamma$ belonging to a wider class of curves.

\noindent \textbf{Notation.} For a $2$-dimensional Riemannian manifold $M$, we let $K(p)$ denote the sectional curvature of $M$ at a point $p \in M$.
Let $\gamma$ be a regular parametrized curve in $M$. We let $\kappa_\gamma(t)$ denote the geodesic curvature of $\gamma$ at $t$,
\begin{equation*} \label{def geodesic curvature}
\kappa_\gamma(t) = \frac{1}{|\gamma'(t)|} \left| \frac{D}{dt} \frac{\gamma'(t)}{|\gamma'(t)|} \right|,
\end{equation*}
where $D/dt$ denotes the covariant derivative in the variable $t$.
For any point $p \in M$ and $v \in T_pM$, we let $v^\perp$ denote a choice of vector in $T_p M$ such that $|v^\perp| = |v|$ and $\langle v, v^\perp \rangle = 0$. $SM = \{ v \in TM : |v| = 1 \}$ denotes the unit sphere bundle over $M$.

Essential to our result is a particular function $\mathbf k$ on the unit sphere bundle $SM$, defined below.

\begin{deff} \label{def k} Let $(M,g)$ be a $2$-dimensional Riemannian manifold, without boundary, with non-positive sectional curvature. Let $v \in SM$ and $\zeta$ be the geodesic with $\zeta'(0) = v$, and $J$ be a Jacobi field along $\zeta$ satisfying
\begin{equation} \label{J initial condition}
|J(0)| = 1 \quad \text{ and } \quad \langle J(0), \zeta'(0) \rangle = 0.
\end{equation}
We denote by $\mathbf k(v)$ the unique value such that
\begin{equation} \label{J bounded}
|J(r)| = O(1) \quad \text{ for } r \leq 0
\end{equation}
if $J$ satisfies the additional initial condition
\begin{equation} \label{J' initial condition}
\frac{D}{dr} J(0) = \mathbf k(v) J(0).
\end{equation}
\end{deff}

We verify that $\mathbf k$ is well-defined, continuous, and non-negative in Proposition ~\ref{verify def k}. The geometric meaning of $\mathbf k$ is clearer after pulling it back to the universal cover of $M$. By the theorem of Hadamard, we identify the universal cover of $(M,g)$ with $(\R^2,\tilde g)$, where $\tilde g$ is the pullback of $g$ through the covering map. If $v$ and $\zeta$ are as in the definition and $\tilde \zeta$ is a lift of $\zeta$ to $\R^2$, then $\mathbf k(v)$ denotes the limiting curvature of a circle at $\tilde \zeta(0)$ with center at $\tilde \zeta(-R)$ as $R \to \infty$. This fact comes out in the proof of Proposition \ref{verify def k} and Remark \ref{k lift}. Our main result is as follows.

\begin{theorem} \label{main theorem}
Let $(M,g)$ be a compact $2$-dimensional Riemannian manifold without boundary and with nonpositive sectional curvature. Let $b$ be a smooth function on $\R$ with compact support and $\gamma$ be a smooth unit-speed curve satisfying
\begin{equation} \label{main theorem hypotheses}
\kappa_\gamma(t) \neq \mathbf k(\gamma'^\perp (t)) \quad \text{ and } \quad \kappa_\gamma(t) \neq \mathbf k(- \gamma'^\perp(t)) \qquad \text{ for all } t \in \supp b.
\end{equation}
Then,
\begin{equation} \label{main theorem conclusion}
\int b(t) e_\lambda(\gamma(t)) \, dt = o(1)
\end{equation}
as $\lambda \to \infty$.
\end{theorem}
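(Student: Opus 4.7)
My plan is to adapt the framework of Chen--Sogge~\cite{ChenSogge} to the nonpositive-curvature setting, replacing their Gauss--Bonnet-based Hessian estimate with one derived from the function $\mathbf k$ of Definition~\ref{def k}. Fix an even Schwartz function $\rho$ on $\R$ with $\rho(0)=1$ and $\hat\rho$ supported in a small neighborhood of the origin, together with a parameter $T\gg 1$ to be chosen later. Since $\rho(T(\lambda-\sqrt{-\Delta_g}))e_\lambda = e_\lambda$, a standard $TT^*$ argument bounds the squared integral by
\[
\left|\int b(t)\,e_\lambda(\gamma(t))\,dt\right|^2 \;\leq\; \left|\iint b(t)\overline{b(s)}\,K_T(\gamma(t),\gamma(s))\,dt\,ds\right|,
\]
where $K_T$ is the kernel of $\rho^2(T(\lambda-\sqrt{-\Delta_g}))$. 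I express $K_T$ by Fourier inversion in terms of the half-wave group, lift the kernel to the universal cover $(\R^2,\tg)$, and expand via Hadamard's parametrix; this decomposes the right-hand side into a sum over deck transformations $\Gamma$, with the $\alpha=\mathrm{id}$ term giving an $O(T^{-1})$ contribution by standard local stationary phase, so it suffices to estimate the remainder.

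Each non-identity term is a two-dimensional oscillatory integral
\[
I_\alpha(\lambda)=T^{-1}\iint e^{i\lambda\Phi_\alpha(t,s)}\,a_\alpha(t,s;\lambda)\,dt\,ds,\qquad \Phi_\alpha(t,s)=\dtg(\tilde\gamma(t),\alpha\tilde\gamma(s)),
\]
with amplitude $a_\alpha$ supported where $\dtg(\tilde\gamma(t),\alpha\tilde\gamma(s))\lesssim T$ and of size $O(\lambda^{1/2})$ from the Hadamard parametrix. I estimate $I_\alpha$ by two-dimensional stationary phase in $(t,s)$: the derivatives $\partial_t\Phi_\alpha$ and $\partial_s\Phi_\alpha$ are, up to sign, the cosines of the angles that $\gamma'$ makes with the minimizing $\tg$-geodesic $\zeta$ joining $\tilde\gamma(t)$ to $\alpha\tilde\gamma(s)$, so a critical point occurs precisely when $\zeta$ meets $\tilde\gamma$ perpendicularly at both endpoints. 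At such a critical point the second variation formula for arc length identifies the diagonal Hessian entries as
\[
\partial_t^2\Phi_\alpha=\kappa_C(\tilde\gamma(t))\pm\kappa_\gamma(t),\qquad \partial_s^2\Phi_\alpha=\kappa_{C'}(\alpha\tilde\gamma(s))\pm\kappa_\gamma(s),
\]
where $C$ is the $\tg$-geodesic circle centered at $\alpha\tilde\gamma(s)$ through $\tilde\gamma(t)$, $C'$ is the analogous circle with the roles reversed, and the signs depend on the normal side of $\tilde\gamma$ on which $\alpha\tilde\gamma(s)$ lies; the mixed partial is controlled by a Jacobi field along $\zeta$ and, in nonpositive curvature, decays like $(\dtg)^{-1}$.

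The crux of the argument, and the principal obstacle I anticipate, is securing a uniform lower bound on $|\det\nabla^2\Phi_\alpha|$ at all critical points on the support of $a_\alpha$. By Definition~\ref{def k} and the Jacobi equation, the curvature $\kappa_C(\tilde\gamma(t))$ of the geodesic circle centered at $\alpha\tilde\gamma(s)$ through $\tilde\gamma(t)$ converges, as $\alpha\tilde\gamma(s)$ recedes to the ideal boundary along the geodesic ray from $\tilde\gamma(t)$ in direction $\pm\gamma'^\perp(t)$, to $\mathbf k(\mp\gamma'^\perp(t))$, and the analogous statement holds for $\kappa_{C'}(\alpha\tilde\gamma(s))$. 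Combining hypothesis~\eqref{main theorem hypotheses} with continuity of $\mathbf k$ from Proposition~\ref{verify def k} and compactness of $\supp b$, I obtain a uniform positive lower bound on $|\kappa_\gamma(t)\pm\kappa_C(\tilde\gamma(t))|$ and the corresponding $s$-quantity whenever $\dtg(\tilde\gamma(t),\alpha\tilde\gamma(s))$ exceeds a threshold depending only on $\gamma$ and $b$; together with the $(\dtg)^{-1}$ bound on the mixed partial this forces $|\det\nabla^2\Phi_\alpha|\geq c>0$ for all but finitely many $\alpha$, and the remaining $\alpha$ are handled individually by inspection.

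Stationary phase then yields $|I_\alpha(\lambda)|\lesssim T^{-1}\lambda^{-1/2}$ uniformly, while a volume comparison on $(\R^2,\tg)$ bounds the number of contributing $\alpha$ by $e^{CT}$. The total remainder is $O(T^{-1}e^{CT}\lambda^{-1/2})$, and taking $T=c\log\lambda$ with $c$ sufficiently small renders this and the $O(T^{-1})$ identity term both $o(1)$ as $\lambda\to\infty$, completing the proof.
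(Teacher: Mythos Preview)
Your overall strategy matches the paper's almost exactly: reduce via $TT^*$, lift to the universal cover, expand over deck transformations, and apply stationary phase with the Hessian controlled through the circle-curvature/$\mathbf k$ comparison. The geometric core---that $\partial_t^2\Phi_\alpha$ at a critical point equals $\pm\kappa_\gamma(t)+\kappa_C$, that $\kappa_C\to\mathbf k(\pm\gamma'^\perp)$ as the center recedes (the paper's Lemma~\ref{large radius}), and that the mixed partial is $O(\phi_\alpha^{-1})$ (the paper's Lemma~\ref{mixed bound})---is exactly what the paper uses.

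There is, however, a real gap in your summation step. You assert that stationary phase gives $|I_\alpha(\lambda)|\lesssim T^{-1}\lambda^{-1/2}$ \emph{uniformly} in $\alpha$, then sum $e^{CT}$ terms and take $T=c\log\lambda$. But a uniform stationary phase bound requires uniform control not just on $|\det\nabla^2\Phi_\alpha|$ from below but on \emph{all} higher derivatives of $\Phi_\alpha$ and of the amplitude from above; you address only the Hessian determinant. The paper sidesteps this entirely: it fixes $T$ once and for all, so only finitely many $\alpha$ contribute, and it accepts $\alpha$-dependent constants $C_\alpha$ in Propositions~\ref{local bound} and~\ref{global bound}. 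The resulting bound is $CT^{-1}+C_T\lambda^{-1/2}$, which is made small by first choosing $T$ large and then $\lambda$ large. Your route would yield the stronger $(\log\lambda)^{-1/2}$ decay of Sogge--Xi--Zhang, but that genuinely requires the extra uniform derivative bounds you have not supplied.

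A second point you pass over too quickly is the treatment of the finitely many ``close'' $\alpha$ for which the distance threshold (and hence the Hessian nondegeneracy) fails. You say these are ``handled individually by inspection,'' but some argument is needed: the Hessian can in fact be degenerate for such $\alpha$, so two-dimensional stationary phase is unavailable. The paper (Proposition~\ref{local bound}) shows instead that at any critical point on the diagonal at least one of $\partial_s^2\phi_\alpha$, $\partial_t^2\phi_\alpha$ is nonzero---using that a nontrivial deck transformation has no fixed points to rule out a sign obstruction---and then applies one-dimensional stationary phase in a single variable to get $O(\lambda^{-1/2})$ per term. This yields only an $O(1)$ contribution (after the $\lambda^{1/2}$ amplitude factor), which is why these terms land in the $CT^{-1}$ part of the final bound rather than the $C_T\lambda^{-1/2}$ part.
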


If $M = \R^2/2\pi \Z^2$ is the flat torus, one can check directly from the definition that $\mathbf k \equiv 0$, and so $\gamma$ must have nonvanishing curvature by \eqref{main theorem hypotheses}. In fact, much stronger decay can be obtained on the torus in this situation. We write
\[
    e_\lambda(x) = \sum_{|m| = \lambda} a(m) e^{i x \cdot m}
\]
where $m \in \Z^2$ and
\[
    \sum_{|m| = \lambda} |a(m)|^2 = 1.
\]
Hence by Cauchy-Schwarz
\[
    \left| \int b(t) e_\lambda(\gamma(t)) \, dt \right| \leq \#\{ m \in \Z^2 : |m| = \lambda \}^{1/2} \sup_{|m| = \lambda} \left| \int b(t) e^{i \gamma(t) \cdot m} \, dt \right|.
\]
Since $\gamma$ has nonvanishing curvature, an elementary stationary phase argument tells us the supremum in the line above is $O(\lambda^{-1/2})$. Bounds on the divisor function in the Gaussian integers give us
\[
    \#\{ m \in \Z^2 : |m^2| = \lambda^2 \} = O(\lambda^{\epsilon})
\]
for any fixed $\epsilon > 0$. Hence, we obtain $O(\lambda^{-1/2+\epsilon})$ decay for \eqref{main theorem conclusion} for the torus. This result is essentially sharp as demonstrated by taking $\gamma$ to be a circle and $b \equiv 1$.

Another special case is when $M$ is a compact hyperbolic surface, i.e. $M$ has constant sectional curvature $-1$. Then, $\mathbf k \equiv 1$. The hypotheses \eqref{main theorem hypotheses} then exclude curves that lift to horocycles in the universal cover. As in ~\cite{Helgason}, the characters used in the Fourier transform on the hyperbolic plane are constant on families of horocycles. The author would be interested to know of an example of a compact hyperbolic surface and $\gamma$ with curvature $1$ such that the integral of eigenfunctions over $\gamma$ saturate the $O(1)$ bound, i.e.
\[
    \limsup_{\lambda \to \infty} \left| \int b(t) e_\lambda(\gamma(t)) \, dt \right| > 0.
\]

To prove our main result, we follow Chen and Sogge's strategy exactly as in ~\cite{ChenSogge}. First, we make a reduction using the Cauchy-Schwarz inequality to phrase the bound in \eqref{main theorem conclusion} as a kernel bound. Second, we lift the problem to the universal cover where we will use a lemma from ~\cite{ChenSogge} to write the kernel as a sum of oscillatory integrals. In ~\cite{ChenSogge}, Chen and Sogge use the Gauss-Bonnet theorem to obtain bounds on the derivatives of the phase function and conclude their argument with stationary phase. We obtain bounds on the derivatives of the phase function by exploiting our hypotheses on $\gamma$ and the behavior of the curvature of large circles in the universal cover.

\noindent \textbf{Acknowledgements.}  The author would like to thank his advisor, Christopher Sogge, for providing the initial problem, related materials, feedback, and support\footnote{This work is partially supported by the NSF.}. The author would also like to thank Yakun Xi and Cheng Zhang for their feedback.

\newsection{Standard reduction and lift to the universal cover}

We use Chen and Sogge's argument in ~\cite{ChenSogge} to reduce the bound in \eqref{main theorem conclusion} to two stationary phase arguments, Propositions \ref{local bound} and \ref{global bound}, which we prove using the tools developed in the previous section.

Let $\rho \in C^\infty(\R)$ be a smooth function satisfying $\rho(0) = 1$ and $\supp \hat \rho \subset [-1/2,1/2]$. For any $T > 1$, we define the operator $\rho(T(\lap g - \lambda))$ using the spectral theorem, i.e.
\[
	\rho(T(\lap g - \lambda)) f = \sum_j \rho(T(\lambda_j - \lambda)) E_j f
\]
where $E_j$ is the orthogonal projection of $f$ onto the space spanned by $e_j$. To prove Theorem \ref{main theorem}, it suffices to show
\begin{equation} \label{mainbound}
	\left| \int b(t) \rho(T(\lap g - \lambda)) f(\gamma(t)) \, dt \right| \leq (CT^{-1} + C_T \lambda^{-1/2})^{1/2} \| f \|_{L^2(M)}.
\end{equation}
Where $C$ is a fixed constant and $C_T$ is some constant depending on $T$.
Using
\[
	\rho(T(\lap g - \lambda))f(x) = \int_M \left( \sum_j \rho(T(\lambda_j - \lambda)) e_j(x) \overline{e_j(y)} \right) f(y) \, dV(y),
\]
Cauchy-Schwarz, and orthogonality\footnote{The Cauchy-Schwarz reduction here occurred earlier in ~\cite{ChenSogge} and ~\cite{SXZ}.}, we write the integral in \eqref{mainbound} as
\begin{align*}
	\left| \int_M \int \sum_j \right. & \left. \vphantom{\int_M \sum_j} b(t)  \rho(T(\lambda_j - \lambda)) e_j(\gamma(t)) \overline{e_j(y)} f(y) \, dt \, dV(y) \right| \\
	&\leq \left( \int_M \left| \int \sum_j b(t) \rho(T(\lambda_j - \lambda)) e_j(\gamma(t)) \overline{e_j(y)} \, dt \right|^2 \, dV(y) \right)^{1/2} \| f \|_{L^2(M)}\\
	&= \left| \iint \sum_j b(s,t) \chi(T(\lambda_j - \lambda)) e_j(\gamma(s)) \overline{e_j(\gamma(t))} \, ds \, dt \right|^{1/2} \| f \|_{L^2(M)}
\end{align*}
where $b(s,t) = b(s) b(t)$ and $\chi = |\rho|^2$. Note that $\supp \hat \chi \subset [-1,1]$. Hence \eqref{mainbound} would follow if we could show
\begin{equation} \label{mainbound2}
	\left| \iint \sum_j b(s,t) \chi(T(\lambda_j - \lambda)) e_j(\gamma(s)) \overline{e_j(\gamma(t))} \, ds \, dt \right| \leq CT^{-1} + C_T \lambda^{-1/2}
\end{equation}
By Fourier inversion and a change of variables, we have
\begin{align*}
	\sum_j \chi(T(\lambda_j - \lambda)) e_j(x) \overline{e_j(y)} &= \frac{1}{2\pi} \int \sum_j \hat \chi(\tau) e^{i\tau T(\lambda_j - \lambda)} e_j(x) \overline{e_j(y)} \, d\tau \\
	&= \frac{1}{2\pi T} \int \sum_j \hat \chi(\tau/T) e^{i\tau(\lambda_j - \lambda)} e_j(x) \overline{e_j(y)} \, d\tau\\
	&= \frac{1}{2\pi T} \int  \hat \chi(\tau/T) e^{-i\tau\lambda} e^{i\tau \lap g}(x,y) \, d\tau,
\end{align*}
where the last line follows from writing out the kernel of the half-wave operator $e^{i\tau \lap g}$,
\[
	e^{i\tau \lap g}(x,y) = \sum_j e^{i\tau \lambda_j} e_j(x) \overline{e_j(y)}.
\]
Hence, we write \eqref{mainbound2} as
\begin{equation} \label{mainbound3}
	\left| \iiint b(s,t) \hat \chi(\tau/T) e^{-i \tau \lambda} e^{i \tau \lap g}(\gamma(s), \gamma(t)) \, d\tau \, ds \, dt \right| \leq C + C_T \lambda^{-1/2}.
\end{equation}

At this point, we let $\beta \in C_0^\infty(\R)$ with $\beta(\tau) = 1$ if $|\tau| \leq 3$ and $\beta(\tau) = 0$ if $|\tau| \geq 4$. By scaling the metric, we can assume the injectivity radius of $M$ is $10$ or more, and by a partition of unity, we may restrict the support of $b$ to lie in an interval of length $1$. We write
\begin{align*}
\int \hat \chi(\tau/T) e^{-i\tau \lambda} e^{i\tau \lap g} (x,y) \, d\tau &= \int \beta(\tau) \hat \chi(\tau/T) e^{-i\tau \lambda} e^{i\tau \lap g} (x,y) \, d\tau\\
&\quad + \int (1 - \beta(\tau)) \hat \chi(\tau/T) e^{-i\tau \lambda} e^{i\tau \lap g} (x,y) \, d\tau.
\end{align*}
We claim the contribution of the $\beta$ part to the integral in \eqref{mainbound3} is $O(1)$. As noted in ~\cite{ChenSogge} and ~\cite{SXZ}, by the proof of Lemma 5.1.3 in ~\cite{FIOs} and the assumption that the injectivity radius of $M$ is at least $10$ we can write this term as
\[
\int \beta(\tau) \hat \chi(\tau /T) e^{-i\tau \lambda} e^{i\tau \lap g} (x,y) \, d\tau = \lambda^{1/2} \sum_\pm a_\pm(\lambda; d_g(x,y)) e^{\pm i \lambda d_g(x,y)} + O(1)
\]
where $a_\pm$ satisfies bounds
\begin{equation} \label{r > 1/lambda}
\left| \frac{d^j}{dr^j} a_\pm(\lambda; r) \right| \leq C_j r^{-j - 1/2} \quad \text{ if } r \geq \lambda^{-1},
\end{equation}
and
\begin{equation} \label{r < 1/lambda}
|a_\pm(\lambda; r)| \leq C \lambda^{1/2} \quad \text{ if } 0 \leq r \leq \lambda^{-1}.
\end{equation}
Our claim follows if
\begin{equation} \label{beta contribution}
\lambda^{1/2} \iint b(s,t) a_\pm(\lambda; d_g(\gamma(s),\gamma(t))) e^{\pm i \lambda d_g(\gamma(s), \gamma(t))} \, ds \, dt = O(1).
\end{equation}
After perhaps further restricting the support of $b$, we have by the inverse function theorem a smooth change of variables $(s,r) \mapsto (s,t(s,r))$ where
\[
r = \begin{cases} d_g(\gamma(s), \gamma(t)) & \text{if } s \geq t \\
-d_g(\gamma(s), \gamma(t)) & \text{if } s \leq t.
\end{cases}
\]
We then rewrite the integral in \eqref{beta contribution} as
\begin{equation} \label{post coc} \nonumber
\lambda^{1/2} \left( \iint_{|r| \leq \lambda^{-1}} + \iint_{|r| > \lambda^{-1}} \right) \tilde b(s,r) a_\pm (\lambda ; |r|) e^{\pm i \lambda |r|} \, ds \, dr
\end{equation}
where we use $\tilde b(s,r) \, ds \, dr$ to denote $b(s,t) \, ds \, dt$. The $|r| \leq \lambda^{-1}$ part is trivially $O(1)$ by \eqref{r < 1/lambda}. The $|r| > \lambda^{-1}$ part is also $O(1)$ after integrating by parts once in $r$ and applying \eqref{r > 1/lambda}. Hence we have \eqref{beta contribution}, and what is left is to show
\begin{equation} \label{mainbound4}
	\left| \iiint b(s,t) (1 - \beta(\tau)) \hat \chi(\tau/T) e^{-i \tau \lambda} e^{i \tau \lap g}(\gamma(s), \gamma(t)) \, d\tau \, ds \, dt \right| \leq C + C_T \lambda^{-1/2}.
\end{equation}

We will need to lift the computation to the universal cover. Before we do this, we want to rephrase \eqref{mainbound4} using $\cos(\tau \lap g)$ rather than $e^{i\tau \lap g}$. This will allow us to make use of Huygen's principle after we lift to ensure the kernel we obtain is supported on a neighborhood of the diagonal. Using Euler's formula, we write
\begin{align*}
	\int (1 - \beta(\tau)) &\hat \chi(\tau/T) e^{-i\tau \lambda} e^{i \tau \lap g}(x,y) \, d\tau \\
	&= 2 \int (1 - \beta(\tau)) \hat\chi(\tau/T)  e^{-i\tau \lambda} \cos(\tau \lap g)(x,y) \, d\tau\\
	&\quad - \int (1 - \beta(\tau)) \hat \chi(\tau/T) e^{-i \tau \lambda} e^{-i \tau \lap g}(x,y) \, d\tau.
\end{align*}
Writing $\hat \Phi_T(\tau) = (1 - \beta(\tau)) \hat \chi(\tau/T)$, the latter term becomes
\[
\sum_j \int (1 - \beta(\tau)) \hat \chi(\tau/T) e^{-i\tau(\lambda_j + \lambda)} e_j(x) \overline{e_j(y)} \, d\tau = \frac{1}{2\pi} \sum_j \Phi_T(\lambda_j + \lambda) e_j(x) \overline{e_j(y)}.
\]
The contribution from this term to the integral in \eqref{mainbound4} is rapidly decaying in $\lambda$, uniformly in $T$. Hence, it suffices to show
\begin{align} \label{cosinebound}
	\left| \iiint b(s,t) (1 - \beta(\tau)) \hat \chi(\tau/T) e^{-i \tau \lambda} \cos(\tau \lap g)(\gamma(s), \gamma(t)) d\tau \, ds \, dt \right| \leq C + C_T \lambda^{-1/2}
\end{align}

We are now ready to lift to the universal cover. We identify the universal cover of $M$ with $\R^2$ equipped with the pullback metric $\tilde g$. Let $\Gamma$ be the group of deck transformations. Let $\tilde f \in C^\infty_0(\R^2)$ and
\[
	f(p) = \sum_{\alpha \in \Gamma} \tilde f(\alpha (\tilde p)),
\]
where $\tilde p$ is a lift of $p$ through the covering map. Now let $\tilde u(\tilde p, t)$ be the solution to the wave equation $\square_{\tilde g} \tilde u = 0$ with initial data $u(\tilde p,0) = \tilde f(\tilde p)$. Let $u(p,t) = \sum_{\alpha \in \Gamma} \tilde u(\alpha(\tilde p), t)$. Observe that $u$ satisfies the wave equation $\square_g u = 0$ with initial data $u(p,0) = f(p)$. Hence, we conclude that
\[
	\cos(\tau \lap{g}) f(p) = \sum_{\alpha \in \Gamma} \cos(\tau \lap{\tilde g}) \tilde f(\alpha(\tilde p)),
\]
and so we have
\[
	\cos(\tau \lap{g})(x,y) = \sum_{\alpha \in \Gamma} \cos(\tau \lap{\tilde g})(\alpha(\tilde x), \tilde y),
\]
where $\tilde x$ and $\tilde y$ are lifts of $x$ and $y$ through the covering map, respectively. Hence, we write \eqref{cosinebound} as
\begin{equation} \label{unicoverbound}
\left| \sum_{\alpha \in \Gamma} \iint b(s,t) K_{T,\lambda}(\tilde \gamma_\alpha(s), \tilde \gamma(t)) \, ds \, dt \right| \leq C + C_T \lambda^{-1/2}
\end{equation}
where
\begin{equation} \label{kernel definition} \nonumber
K_{T,\lambda}(x,y) = \int (1 - \beta(\tau)) \hat \chi(\tau/T) e^{-i\tau \lambda} \cos(\tau \lap{ \tilde g})(x,y) \, d\tau,
\end{equation}
where $x$ and $y$ belong to the universal cover.
Here $\tilde \gamma$ is a lift of $\gamma$ to the universal cover, and $\tilde \gamma_\alpha = \alpha \circ \tilde \gamma$. Now $\cos(\tau \lap{\tilde g})(x,y)$ is supported on $d_g(x,y) \leq |\tau|$ by Huygen's principle, and since $\hat \chi(\tau/T)$ is suppoted on $[-T,T]$, we have that $K_{T,\lambda}$ is supported on $d_{\tilde g}(x,y) \leq T$. Hence, the sum in \eqref{unicoverbound} is finite. In fact, as noted in ~\cite{ChenSogge} and ~\cite{SXZ}, the sum has $O(e^{CT})$ terms by volume comparison.

To proceed, we will need bounds on $K_{T,\lambda}$. We will make use of Lemma 2.4 from ~\cite{ChenSogge}, stated below.

\begin{lemma}[Chen and Sogge] We write
\[
K_{T,\lambda}(\tilde x, \tilde y) = \lambda^{1/2} w( \tilde x, \tilde y) \sum_\pm a_\pm(T,\lambda; d_{\tilde g}(\tilde x, \tilde y)) e^{\pm i \lambda d_{\tilde g}(\tilde x, \tilde y)} + R_{T,\lambda}( \tilde x, \tilde y)
\]
where $w$ is a smooth bounded function on $\R^2 \times \R^2$ and where for each $j = 0,1,2,\ldots$ there is a constant $C_j$ independent of $T,\lambda \geq 1$ so that
\begin{equation} \label{a bounds}
\left| \frac{d^j}{dr^j} a_\pm(T,\lambda; r) \right| \leq C_j r^{-1/2 - j}, \quad \text{ for } r \geq 1,
\end{equation}
and for a constant $C_T$ independent of $\gamma$ and $\lambda$ such that
\[
|R_{T,\lambda}(\tilde x, \tilde y)| \leq C_T \lambda^{-1}.
\]
\end{lemma}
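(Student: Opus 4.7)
The plan is to combine a Hadamard--Sogge parametrix for the half-wave propagator on the Cartan--Hadamard cover $(\R^2,\tilde g)$ with a single stationary-phase in the angular frequency variable. Since the curvature is non-positive and the cover is simply connected, $\exp_y:T_y\R^2\to\R^2$ is a global diffeomorphism for every $y$, $d_{\tilde g}$ is smooth off the diagonal, and there are no conjugate points. Finite propagation speed together with $\supp \hat\chi(\cdot/T)\subset[-T,T]$ restricts $K_{T,\lambda}$ to $\{d_{\tilde g}(x,y)\le T\}$, where the parametrix is valid. Write $2\cos(\tau\lap{\tilde g})=e^{i\tau\lap{\tilde g}}+e^{-i\tau\lap{\tilde g}}$ and handle each half-wave.

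For $|\tau|\le T$ the standard FIO parametrix reads, modulo a smoothing operator,
\[
e^{\pm i\tau\lap{\tilde g}}(x,y)=\frac{1}{(2\pi)^2}\int_{\R^2}e^{i(\varphi(x,y,\xi)\pm\tau|\xi|)}\alpha(x,y,\xi)\,d\xi,
\]
with geodesic phase $\varphi(x,y,\xi)=\langle\exp_y^{-1}(x),\xi\rangle$ and $\alpha$ a classical order-zero symbol whose principal part involves the square root of the Jacobian of $\exp_y$; smoothness and non-degeneracy of $\varphi$ are uniform on the support of interest precisely because there are no conjugate points. Inserting into the definition of $K_{T,\lambda}$ and doing the $\tau$-integral first gives, for each sign,
\[
\frac{1}{(2\pi)^2}\int e^{i\varphi(x,y,\xi)}\alpha(x,y,\xi)\bigl[2\pi T\chi(T(|\xi|\mp\lambda))-\Phi_T(|\xi|\mp\lambda)\bigr]d\xi,
\]
where $\Phi_T$ is the Fourier transform of $\beta\hat\chi(\cdot/T)$, Schwartz-class for each fixed $T$. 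Repeated integration by parts in $\xi$ makes the $\Phi_T$ piece $O_T(\lambda^{-N})$ and absorbs it into $R_{T,\lambda}$.

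Passing to polar coordinates $\xi=\sigma\omega$, the factor $\chi(T(\sigma\mp\lambda))$ localizes $\sigma$ in a band of width $O(1/T)$ about $\pm\lambda$, so after $\sigma=\lambda+u/T$ only $\sigma\approx\lambda$ contributes. The inner $\omega$-integral carries phase $\sigma\,\omega\cdot\exp_y^{-1}(x)$ with two non-degenerate critical points $\omega=\pm\exp_y^{-1}(x)/r$, $r:=d_{\tilde g}(x,y)$, critical values $\pm r$, and angular Hessian of size $r$. Stationary phase in $\omega$, valid once $\sigma r\ge\lambda\ge 1$, yields
\[
\int_{S^1}e^{i\sigma\,\omega\cdot\exp_y^{-1}(x)}\alpha(x,y,\sigma\omega)\,d\omega=\sum_\pm(\sigma r)^{-1/2}e^{\pm i\sigma r}\bigl[c_\pm(x,y)+O((\sigma r)^{-1})\bigr].
\]
The leftover Jacobian $\sigma\,d\sigma$ cancels the $\sigma^{-1/2}$ to leave $\sigma^{1/2}\approx\lambda^{1/2}$; the smooth $u$-integration then produces a function $a_\pm(T,\lambda;r)$ absorbing the $r^{-1/2}$ from stationary phase, and the announced decomposition $\lambda^{1/2}w(x,y)\sum_\pm a_\pm(T,\lambda;r)e^{\pm i\lambda r}$ follows.

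The bounds \eqref{a bounds} hold because the oscillation $e^{\pm i\lambda r}$ has been factored out, so each $\partial_r$ acts only on the amplitude; Leibniz on $r^{-1/2}$ and on the $(\sigma r)^{-1}$-tail contributes one extra factor of $r^{-1}$ per derivative, giving $C_j r^{-1/2-j}$. The remainder $R_{T,\lambda}$ collects the $\Phi_T$ term, the smoothing error in the parametrix, and the $O((\sigma r)^{-1})=O(\lambda^{-1})$ first correction in the $\omega$-stationary phase, each of size $C_T\lambda^{-1}$. The main obstacle is constructing the parametrix uniformly in $T$ on the non-compact cover; Cartan--Hadamard is what supplies the uniform smoothness and non-degeneracy of $\varphi$ on the distance range $\{d_{\tilde g}(x,y)\le T\}$ that the stationary-phase argument needs.
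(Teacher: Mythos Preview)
The paper does not give a proof of this lemma; it is quoted as Lemma~2.4 of Chen--Sogge~\cite{ChenSogge} and used as a black box. Your sketch follows the standard route---Hadamard/Fourier-integral parametrix on a Cartan--Hadamard manifold, followed by stationary phase in the angular variable---which is essentially what Chen and Sogge (and before them B\'erard~\cite{Berard} and Sogge~\cite{SoggeHang}) carry out, so there is nothing in the present paper to compare against and your approach is the expected one.

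One bookkeeping issue: you place the first stationary-phase correction $O((\sigma r)^{-1})$ directly into $R_{T,\lambda}$ and assert it is of size $C_T\lambda^{-1}$. After multiplication by the polar Jacobian $\sigma\approx\lambda$ and the leading factor $(\sigma r)^{-1/2}$, that correction contributes a term of size $\lambda^{-1/2}r^{-3/2}$, which for $r\ge 1$ is only $O(\lambda^{-1/2})$, not $O(\lambda^{-1})$. The clean fix is to keep the full stationary-phase expansion inside the symbol $a_\pm$ (this is precisely what yields the all-orders estimate $|\partial_r^j a_\pm|\le C_j r^{-1/2-j}$), and to reserve $R_{T,\lambda}$ for the smoothing error in the Hadamard parametrix and the $\Phi_T$ piece, both of which are genuinely $O_T(\lambda^{-N})$. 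This is a minor repackaging rather than a real gap in the argument.
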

The contribution of the $R_{T,\lambda}$ to the sum in \eqref{unicoverbound} is bounded by $C_T \lambda^{-1}$, better than required. Moreover since $\beta(\tau) = 1$ if $|\tau| \leq 3$, we have $(1 - \beta(\tau)) \cos(\tau \lap{\tilde g})(\tilde x, \tilde y)$ is smooth if $d_{\tilde g}( \tilde x, \tilde y) \leq 1$. Hence for $d_{\tilde g}( \tilde x, \tilde y) \leq 1$,
\[
\int (1 - \beta(\tau)) \hat \chi(\tau /T) e^{-i\tau \lambda} \cos(\tau \lap {\tilde g})(\tilde x, \tilde y) = O_T(\lambda^{-N})
\]
for arbitrary $N$, and hence the contribution of the identity term in \eqref{unicoverbound} is trivially bounded by $C_T \lambda^{-1/2}$. We now need only show
\begin{equation} \label{after kernel}
\left| \lambda^{1/2} \sum_{\alpha \in \Gamma \setminus I} \iint b(s,t) w(\tilde \gamma_\alpha(s), \tilde \gamma(t)) a_\pm(T,\lambda; \phi_\alpha(s,t)) e^{\pm i \lambda \phi_\alpha(s,t)} \, ds \, dt \right| \leq C + C_T \lambda^{-1/2}
\end{equation}
where $\phi_\alpha(s,t) = d_{\tilde g}(\tilde \gamma_\alpha(s), \tilde \gamma(t))$.
To do so, we will split the sum into two parts and bound them separately. Fix $R$ to be determined later (in the proof of Proposition \ref{global bound}), and set
\begin{equation} \label{def A}
A = \{ \alpha \in \Gamma : \phi_\alpha(s,t) \leq R \text{ for some } (s,t) \in \supp b \times \supp b \}.
\end{equation}
We will show that the contribution of $A$ to the sum in \eqref{after kernel} is bounded by a constant, and that
\[
 \left| \lambda^{1/2} \sum_{\Gamma \setminus A}  \iint b(s,t) w(\tilde \gamma_\alpha(s),\tilde \gamma(t)) a_\pm(T,\lambda; \phi_\alpha(s,t)) e^{\pm i \lambda \phi_\alpha(s,t)} \, ds \, dt \right| \leq C_T \lambda^{-1/2}.
\]
The above bounds follow from the Propositions \ref{local bound} and \ref{global bound} below, respectively, then follows \eqref{after kernel} and hence Theorem \ref{main theorem}.

\begin{proposition} \label{local bound}
For any fixed $\alpha \in A \setminus I$, there exists a constant $C_\alpha$ such that
\begin{equation}\label{local bound eq}
\left|  \iint b(s,t) w(\tilde \gamma_\alpha(s), \tilde \gamma(t)) a_\pm(T,\lambda; \phi_\alpha(s,t)) e^{\pm i \lambda \phi_\alpha(s,t)} \, ds \, dt \right| \leq C_\alpha \lambda^{-1/2}.
\end{equation}
\end{proposition}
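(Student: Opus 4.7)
The plan is to apply the method of stationary phase to the two-dimensional oscillatory integral in $(s,t)$, controlling the Hessian of the phase $\phi_\alpha$ using hypothesis (\ref{main theorem hypotheses}) together with the properties of geodesic circle curvatures in nonpositive curvature. Since $\alpha$ is fixed, we may absorb $\alpha$-dependent constants into $C_\alpha$.

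First, by a further partition of unity shrinking $\supp b$ if necessary, I would arrange that $\phi_\alpha(s,t) \geq c_\alpha > 0$ on $\supp b \times \supp b$. This is possible because $\alpha \neq I$ acts freely on the universal cover, so $\tilde\gamma_\alpha$ and $\tilde\gamma$ are distinct curves and their intersection in the universal cover is at most a discrete set that can be excluded by localizing further. Consequently, by (\ref{a bounds}), the amplitude $a_\pm(T,\lambda;\phi_\alpha(s,t))$ and all of its $(s,t)$-derivatives are bounded by constants depending only on $\alpha$, $T$, and the smooth bounded factor $w$, independently of $\lambda$.

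Next, I would analyze the critical set of $\phi_\alpha$. Writing $p = \tilde\gamma_\alpha(s)$ and $r(\cdot) = d_{\tilde g}(p,\cdot)$, the second variation formula for Riemannian distance gives
\[
\partial_t^2 \phi_\alpha(s,t_0) = \operatorname{Hess}(r)(\tilde\gamma'(t_0),\tilde\gamma'(t_0)) + \langle \nabla r, D_t \tilde\gamma'(t_0)\rangle,
\]
and at a critical point $t_0$, where $\tilde\gamma'(t_0) \perp \nabla r$, this evaluates to $\kappa_{\mathrm{circ}}(p;\tilde\gamma(t_0)) \pm \kappa_\gamma(t_0)$, where $\kappa_{\mathrm{circ}}(p;q)$ denotes the geodesic curvature at $q$ of the circle centered at $p$ through $q$, and the sign is determined by the side of $\gamma$ on which $p$ lies. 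An analogous formula holds for $\partial_s^2 \phi_\alpha$ at a joint critical point. The critical-curvature hypothesis (\ref{main theorem hypotheses}), combined with the strict inequality $\kappa_{\mathrm{circ}}(p;q) > \mathbf k(\nu)$ at any finite point $p$ (with $\nu$ the outward radial normal at $q$)---a consequence of the Jacobi equation in nonpositive curvature established earlier in the paper---ensures that at every joint critical point of $\phi_\alpha$ the $2\times 2$ Hessian has rank at least one.

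Finally, I would combine these ingredients via stationary phase. Away from the critical set of $\phi_\alpha$, $|\nabla \phi_\alpha|$ is bounded below and repeated integration by parts yields rapid decay $O_{T,\alpha}(\lambda^{-N})$ for any $N$. Near each joint critical point, after rotating coordinates so that the nonvanishing eigendirection of the Hessian aligns with a coordinate axis, one-dimensional stationary phase in that direction produces $O(\lambda^{-1/2})$, while the compactness of $\supp b$ makes the transverse integration contribute only a bounded factor. Summing over the finitely many critical points in the compact support yields the desired bound $C_\alpha \lambda^{-1/2}$. The main obstacle is the Hessian rank estimate, which hinges on the sharp strict inequality for circle curvatures in nonpositive curvature together with hypothesis (\ref{main theorem hypotheses})---precisely the ingredients developed in the sections preceding this one.
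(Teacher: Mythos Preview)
Your Hessian rank argument has a genuine gap. You assert that hypothesis (\ref{main theorem hypotheses}) together with the strict inequality $\kappa_{\mathrm{circ}}(p;q) > \mathbf{k}(\nu)$ forces the Hessian of $\phi_\alpha$ to have rank at least one at every joint critical point. But consider a critical point where the sign in $\partial_t^2 \phi_\alpha = \kappa_{\mathrm{circ}} - \kappa_\gamma(t_0)$ is minus. Hypothesis (\ref{main theorem hypotheses}) gives only $\kappa_\gamma(t_0) \neq \mathbf{k}(\nu)$, while Lemma~\ref{large radius} gives $\kappa_{\mathrm{circ}} > \mathbf{k}(\nu)$; when $\kappa_\gamma(t_0) > \mathbf{k}(\nu)$ these two facts together do not rule out $\kappa_{\mathrm{circ}} = \kappa_\gamma(t_0)$, since $\kappa_{\mathrm{circ}}$ can take any value strictly above $\mathbf{k}(\nu)$. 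The same issue arises for $\partial_s^2 \phi_\alpha$, and for an off-diagonal critical point nothing in your setup prevents both diagonal entries of the Hessian from vanishing.

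The paper's proof does \emph{not} use hypothesis (\ref{main theorem hypotheses}) here at all---that hypothesis is reserved for Proposition~\ref{global bound}, where $\phi_\alpha$ is large and $\kappa_{\mathrm{circ}}$ is provably close to $\mathbf{k}$. For Proposition~\ref{local bound} the paper instead restricts attention to diagonal points $s_0 = t_0$: since $\alpha$ is a nontrivial deck transformation carrying $\tilde\gamma(t_0)$ to $\tilde\gamma_\alpha(s_0)$ and acting freely, one shows $\alpha_*\zeta'(0) = \zeta'(1)$ along the connecting geodesic $\zeta$, which forces the $\pm$ signs appearing in the formulae for $\partial_s^2\phi_\alpha$ and $\partial_t^2\phi_\alpha$ to be opposite. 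Hence at least one of them equals $+\kappa_\gamma + \kappa_{\mathrm{circ}} > 0$, using only positivity of the circle curvature. This shows the compact diagonal is covered by the open sets $\{\partial_s^2\phi_\alpha \neq 0\}$, $\{\partial_t^2\phi_\alpha \neq 0\}$, $\{\nabla\phi_\alpha \neq 0\}$; a partition of unity then shrinks $\supp b$ so that a single one of these conditions holds throughout $\supp b \times \supp b$, after which one-dimensional stationary (or nonstationary) phase in the appropriate variable yields the $\lambda^{-1/2}$ bound.
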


\begin{proposition} \label{global bound} For any fixed $\alpha \in \Gamma \setminus A$, there exists a constant $C_\alpha$ such that
\begin{equation} \label{global bound eq}
\left|  \iint b(s,t) w(\tilde \gamma_\alpha(s), \tilde \gamma(t)) a_\pm(T,\lambda; \phi_\alpha(s,t)) e^{\pm i \lambda \phi_\alpha(s,t)} \, ds \, dt \right| \leq C_\alpha \lambda^{-1}.
\end{equation}
\end{proposition}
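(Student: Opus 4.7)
The plan is to treat \eqref{global bound eq} as a $2$-dimensional oscillatory integral with phase $\pm\lambda\phi_\alpha$ and amplitude $b(s,t)\,w(\tilde\gamma_\alpha(s),\tilde\gamma(t))\,a_\pm(T,\lambda;\phi_\alpha(s,t))$, and to obtain the bound via stationary phase. Since $\phi_\alpha(s,t)\geq R$ on $\supp b\times\supp b$ for $\alpha\in\Gamma\setminus A$, the amplitude bounds \eqref{a bounds} imply that the full amplitude and all of its $(s,t)$-derivatives are uniformly bounded in terms of $R$, $T$, and the $C^k$ data of $b$, $w$, and $\tilde\gamma$. The task therefore reduces to analyzing the oscillatory integral $\iint \tilde a(s,t)\,e^{\pm i\lambda\phi_\alpha(s,t)}\,ds\,dt$ for a smooth $\tilde a$ compactly supported in $\supp b\times\supp b$.

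I would first compute the derivatives of $\phi_\alpha(s,t)=\dtg(\tilde\gamma_\alpha(s),\tilde\gamma(t))$ using the first and second variation of arc-length. By first variation, $\partial_s\phi_\alpha$ vanishes exactly when $\tilde\gamma_\alpha'(s)$ is perpendicular at $\tilde\gamma_\alpha(s)$ to the minimizing geodesic joining $\tilde\gamma_\alpha(s)$ to $\tilde\gamma(t)$, and analogously for $\partial_t\phi_\alpha$. If $\supp b\times\supp b$ contains no critical point of $\phi_\alpha$, then $|\nabla\phi_\alpha|$ is bounded below on the support of the amplitude, and repeated non-stationary integration by parts yields $O(\lambda^{-N})$ decay for any $N$, much stronger than required.

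Suppose instead that a critical point $(s_0,t_0)$ is present; by choosing $R$ large and exploiting the diameter-$1$ support of $b$, one can arrange at most one such point. The second variation of arc-length identifies the diagonal Hessian entries at $(s_0,t_0)$ as
\[
\partial_s^2\phi_\alpha = \kappa_{S_{\phi_\alpha}(\tilde\gamma(t_0))}(\tilde\gamma_\alpha(s_0))\pm\kappa_{\tilde\gamma_\alpha}(s_0),\qquad \partial_t^2\phi_\alpha = \kappa_{S_{\phi_\alpha}(\tilde\gamma_\alpha(s_0))}(\tilde\gamma(t_0))\pm\kappa_{\tilde\gamma}(t_0),
\]
where $S_r(p)$ denotes the geodesic circle of radius $r$ about $p$. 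By the preliminary analysis of geodesic circles in the universal cover, these circle curvatures converge as $R\to\infty$ to the values of $\bfk$ at $\pm\tilde\gamma_\alpha'^\perp(s_0)$ and $\pm\tilde\gamma'^\perp(t_0)$, and because the hypothesis \eqref{main theorem hypotheses} is strict on the compact set $\supp b$, both diagonal entries are bounded away from $0$ once $R$ is chosen sufficiently large. A parallel computation controls the off-diagonal entry $\partial_s\partial_t\phi_\alpha$, and combined with nonpositive curvature one obtains $|\det\operatorname{Hess}\phi_\alpha(s_0,t_0)|\gtrsim 1$ uniformly. Standard $2$D stationary phase then produces the required bound $O(\lambda^{-1})$ with $C_\alpha$ uniform in $\alpha\in\Gamma\setminus A$.

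The hard step will be the uniform nondegeneracy of the full $2\times 2$ Hessian at a critical point: the curvature hypothesis together with the circle-curvature asymptotics developed earlier easily controls the diagonal, but the off-diagonal term must also be estimated so that $\det\operatorname{Hess}$ remains bounded away from zero uniformly in $\alpha$. This is where nonpositive curvature enters in an essential way, through Jacobi-field estimates governing second variation of arc-length in $(\R^2,\tg)$. Once this uniform lower bound is secured, $R$ may be chosen independently of $\alpha$, so that the constants $C_\alpha$ in \eqref{global bound eq} are uniformly bounded on $\Gamma\setminus A$, which is exactly what the reduction preceding the proposition requires.
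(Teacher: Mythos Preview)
Your proposal is correct and follows essentially the same approach as the paper: control the diagonal Hessian entries at a critical point via the curvature hypothesis together with the circle-curvature asymptotics (Lemma~\ref{large radius}), control the mixed partial via the Jacobi-field estimate (Lemma~\ref{mixed bound}), which gives $|\partial_s\partial_t\phi_\alpha|\leq\phi_\alpha^{-1}\leq R^{-1}$ at critical points, and conclude by two-dimensional stationary phase. The only minor slip is the claim that one can arrange ``at most one'' critical point; the paper only asserts finitely many non-degenerate ones, which is all that stationary phase requires.
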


\newsection{Phase function bounds} \label{PHASE FUNCTION BOUNDS}

To prove Propositions \ref{local bound} and \ref{global bound}, we will need bounds on the derivatives of the phase function $\phi_\alpha$ for $\alpha \neq I$. First, we bound the mixed partial derivative $\partial_s \partial_t \phi_\alpha$, and second compute $\partial_s^2 \phi_\alpha$ in terms of the curvature $\kappa_\gamma$ of $\gamma$ and the curvature of circles. We will use these computations later to obtain bounds on the pure second derivatives of $\phi_\alpha$.

Let $F : \supp b \times \supp b \times \R$ be the smooth map defined so that $r \mapsto F(s,t,r)$ is the constant-speed geodesic with $F(s,t,0) = \tgamma(t)$ and $F(s,t,1) = \tgamma_\alpha(s)$. If $\partial_s$, $\partial_t$, and $\partial_r$ are the coordinate vector fields living in the domain of $F$, then the Lie brackets $[\partial_s, \partial_t]$, $[\partial_s,\partial_r]$ and $[\partial_t,\partial_r]$ all vanish. Hence,
\begin{equation}\label{st commute}
    \frac{D}{ds} \partial_t F - \frac{D}{dt} \partial_s F = [\partial_s F, \partial_t F] = [F_* \partial_s , F_* \partial_t] = F_*[\partial_s, \partial_t] = 0
\end{equation}
and
\begin{equation} \label{r commute}
    \frac{D}{ds} \partial_r F - \frac{D}{dr} \partial_s F = 0 \quad \text{ and } \quad \frac{D}{dt} \partial_r F - \frac{D}{dr} \partial_t F = 0
\end{equation}
similarly (see for example do Carmo ~\cite{do Carmo}). Now,
\[
    \phi_\alpha^2(s,t) = \int_0^1 |\partial_r F(s,t,r)|^2 \, dr,
\]
and so
\begin{align}
    \nonumber \phi_\alpha(s,t) \partial_s \phi_\alpha(s,t) &= \int_0^1 \left\langle \frac{D}{ds} \partial_r F(s,t,r), \partial_r F(s,t,r) \right\rangle \, dr\\
    \nonumber &= \int_0^1 \partial_r \langle \partial_s F(s,t,r), \partial_r F(s,t,r) \rangle \, dr\\
    \label{first derivative} &= \langle \tgamma_\alpha'(s), \partial_r F(s,t,1) \rangle
\end{align}
where the second line follows from \eqref{r commute} and the geodesic equation $\frac{D}{dr} \partial_r F = 0$, and the third line by the fundamental theorem of calculus. Moreover, since the curves $\tgamma$ and $\tgamma_\alpha$ are disjoint, $\phi_\alpha$ is nonvanishing. From this we have the following fact (also noted in ~\cite{ChenSogge} and ~\cite{SXZ}): $\partial_s \phi_\alpha(s,t)$ vanishes if and only if $\tgamma_\alpha$ is perpendicular to the geodesic adjoining $\tgamma_\alpha(s)$ and $\tgamma(t)$. This works similarly where $\partial_t \phi_\alpha$ vanishes, and hence the gradient $\nabla \phi_\alpha(s,t)$ vanishes if and only if $\tgamma$ and $\tgamma_\alpha$ are both perpendicular to the geodesic adjoining $\tgamma_\alpha(s)$ and $\tgamma(t)$. We will appeal to this fact without reference.

Now we compute the mixed partial derivative $\partial_s \partial_t \phi_\alpha$ at a critical point. From \eqref{first derivative} we obtain
\begin{equation} \label{mixed derivative computation}
    \partial_s \phi_\alpha(s,t) \partial_t \phi_\alpha(s,t) + \phi_\alpha(s,t) \partial_t \partial_s \phi_\alpha(s,t) = \left \langle \tgamma_\alpha'(s), \frac{D}{dr} \partial_t F(s,t,1) \right \rangle.
\end{equation}
From this computation we derive a useful bound.

\begin{lemma} \label{mixed bound}
If $(s,t)$ is a critical point of $\phi_\alpha$,
\[
    |\partial_s \partial_t \phi_\alpha(s,t) | \leq \phi_\alpha^{-1}.
\]
\end{lemma}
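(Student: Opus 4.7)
The plan is to recognize $J(r) := \partial_t F(s,t,r)$ as a Jacobi field along the geodesic $r \mapsto F(s,t,r)$, since $F$ is a variation through geodesics, and then exploit nonpositive curvature. First, I would collect the boundary data. Since $F(s,t,1) = \tgamma_\alpha(s)$ is independent of $t$, we have $J(1) = 0$, and $J(0) = \tgamma'(t)$ is a unit vector. The same computation that produced \eqref{first derivative}, applied to $\partial_t \phi_\alpha$, gives $\phi_\alpha \partial_t \phi_\alpha = -\langle \tgamma'(t), \partial_r F(s,t,0)\rangle$, so the critical-point hypothesis forces $\tgamma'(t) \perp \partial_r F(s,t,0)$ and similarly $\tgamma_\alpha'(s) \perp \partial_r F(s,t,1)$.

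Next I would decompose $J = a\,T + b\,e$ into tangential and normal parts along the geodesic, with $T = \partial_r F / \phi_\alpha$ the unit tangent and $e$ a parallel unit normal field. The Jacobi equation $\frac{D^2}{dr^2}J + R(J,\partial_rF)\partial_rF = 0$ then reduces to $a''(r) = 0$ and $b''(r) + \phi_\alpha^2 K(F(s,t,r)) b(r) = 0$. The boundary conditions give $a(0) = 0$ (from $\tgamma'(t) \perp T(0)$) and $a(1) = 0$ (from $J(1) = 0$), forcing $a \equiv 0$. Choosing $e$ so that $e(0) = \tgamma'(t)$ yields $b(0) = 1$ and $b(1) = 0$. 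Reparametrizing by arc length $\rho = \phi_\alpha r \in [0,\phi_\alpha]$, the function $\tilde b(\rho) := b(\rho/\phi_\alpha)$ satisfies $\tilde b'' + K \tilde b = 0$ with $\tilde b(0) = 1$ and $\tilde b(\phi_\alpha) = 0$.

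Now I would invoke the curvature hypothesis. Since the universal cover of $M$ is Hadamard, there are no conjugate points, so $\tilde b > 0$ on $[0, \phi_\alpha)$; consequently $\tilde b'' = -K \tilde b \geq 0$, so $\tilde b$ is convex on $[0,\phi_\alpha]$. Convexity together with the boundary values forces $\tilde b$ below the chord: $\tilde b(\rho) \leq 1 - \rho/\phi_\alpha$. Combined with $\tilde b \geq 0$ and $\tilde b(\phi_\alpha) = 0$, this pins $-1/\phi_\alpha \leq \tilde b'(\phi_\alpha) \leq 0$.

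Finally, I would read off the bound. At the critical point, \eqref{mixed derivative computation} collapses to $\phi_\alpha\, \partial_s \partial_t \phi_\alpha = \langle \tgamma_\alpha'(s), \frac{D}{dr} J(1)\rangle = b'(1) \langle \tgamma_\alpha'(s), e(1)\rangle = \pm b'(1)$, where the last equality uses $\tgamma_\alpha'(s) = \pm e(1)$ from the critical-point condition at $r = 1$. Since $b'(1) = \phi_\alpha \tilde b'(\phi_\alpha)$, I conclude $|\partial_s \partial_t \phi_\alpha| = |\tilde b'(\phi_\alpha)| \leq \phi_\alpha^{-1}$. No step is particularly difficult once $\partial_t F$ is identified as a Jacobi field; the only thing to watch carefully is the parametrization, since the factor of $\phi_\alpha$ relating $r \in [0,1]$ to arc length $\rho \in [0,\phi_\alpha]$ is exactly what converts the chord estimate into the claimed reciprocal bound.
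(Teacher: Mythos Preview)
Your proof is correct and follows essentially the same route as the paper: identify $\partial_t F$ as a perpendicular Jacobi field, write it as $h(r)$ times a parallel unit normal, use nonpositive curvature to get convexity of $h$, and read off the derivative bound at the endpoint from the chord estimate. The only cosmetic difference is that the paper works directly in the $r\in[0,1]$ parametrization (so the convexity bound $0\le h(r)\le 1-r$ gives $|h'(1)|\le 1$ immediately, and the factor $\phi_\alpha$ appears on the left side of \eqref{mixed derivative computation}), whereas you reparametrize by arc length and recover the same $\phi_\alpha^{-1}$ via $b'(1)=\phi_\alpha\,\tilde b'(\phi_\alpha)$; your explicit elimination of the tangential component $a\equiv 0$ is also slightly more detailed than the paper's one-line assertion that $\partial_t F$ is perpendicular.
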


\begin{proof}
Since both $\partial_s \phi_\alpha$ and $\partial_t \phi_\alpha$ vanish at $(s,t)$, we are done if we can show that the right and side of \eqref{mixed derivative computation} is bounded by $1$. Since $\partial_t F(s,t,1) = 0$ and $\partial_t F(s,t,0) = \tgamma'(t)$ is perpendicular to $\zeta$, $\partial_t F$ is a perpendicular Jacobi field to $r \mapsto F(s,t,r)$. Hence if $r \mapsto w(r)$ is the vector field along $r \mapsto F(s,t,r)$ obtained by a parallel transport of $\tgamma'(t)$, we write
\[
    \partial_t F(s,t,r) = h(r) w(r)
\]
where $h$ is a smooth function satisfying
\[
    h''(r) + K(F(s,t,r))h(r) = 0
\]
where $K$ is the sectional curvature of $(\R^2,\tilde g)$, with initial conditions
\[
    h(0) = 1 \quad \text{ and } \quad h(1) = 0.
\]
Since $\partial_t F$ must have no conjugate points, $h$ vanishes only at $1$, and so $h$ is nonnegative on $[0,1]$. Since $K \leq 0$, $h'' \geq 0$ on $[0,1]$, and so $h$ is convex. Hence,
\[
    0 \leq h(r) \leq 1 - r \qquad \text{ for } r \in [0,1].
\]
The above line and the limit definition of the derivative yield the bound
\[
    0 \geq h'(1) \geq -1.
\]
Since
\[
    \frac{D}{dr} \partial_t F(s,t,r) = h'(r) w(r),
\]
we have
\[
    \left| \frac{D}{dr} \partial_t F(s,t,1) \right| = 1
\]
which along with the fact $|\tgamma_\alpha| = 1$, yields the desired bound.
\end{proof}

Now we compute $\partial_s^2 \phi_\alpha$. Fix $t_0$ and let $r \mapsto \zeta(s,r)$ denote the unit speed geodesic with $\zeta(s,0) = \tgamma(t_0)$ and $\zeta(s,\phi_\alpha(s,t_0)) = \tgamma_\alpha(s)$. To avoid ambiguity in the notation, we will fix $s_0$ and let $r_0 = \phi_\alpha(s_0,t_0)$, and compute $\partial_s^2 \phi_\alpha(s_0,t_0)$. By \eqref{first derivative}, 
\[
    \partial_s \phi_\alpha(s,t_0) = \left \langle \tgamma_\alpha'(s), \partial_r \zeta(s,\phi_\alpha(s,t_0)) \right \rangle.
\]
Differentiating in $s$ yields
\begin{equation} \label{2t}
    \partial_s^2 \phi_\alpha(s_0,t_0) = \left \langle \frac{D}{ds} \tgamma_\alpha'(s_0), \partial_r \zeta(s_0,r_0) \right \rangle + \left \langle \tgamma_\alpha'(s_0), \left. \frac{D}{ds} \right|_{s = s_0} \partial_r \zeta(s,\phi(s,t_0)) \right \rangle.
\end{equation}
Now
\begin{align*}
    \left. \frac{D}{ds} \right|_{s = s_0} \partial_r \zeta(s,\phi(s,t_0)) &= \frac{D}{ds} \partial_r \zeta(s_0, r_0) + \partial_s \phi_\alpha(s_0,t_0) \frac{D}{dr} \partial_r \zeta (s_0, r_0).
\end{align*}
The latter term on the right vanishes since $r \mapsto \zeta(s_0,r)$ is a geodesic. The curve $s \mapsto \zeta(s,r_0)$ is a geodesic circle of radius $r_0$. Hence, $\partial_r \zeta$ and $\partial_s \zeta$ are perpendicular by Gauss' lemma and $|\partial_r \zeta| = 1$. Hence, there exists a function $\kappa$ such that
\begin{equation} \label{def kappa}
    \frac{D}{ds} \partial_r \zeta = \kappa \partial_s \zeta.
\end{equation}
In fact, $\kappa(s_0,r_0)$ is the geodesic curvature of the circle $s \mapsto \zeta(s,r_0)$ at $s = s_0$.
Hence,
\[
    \frac{D}{ds} \partial_r \zeta(s_0,r_0) = \kappa(s_0,r_0) \partial_s \zeta(s_0, r_0).
\]
and \eqref{2t} becomes
\begin{equation} \label{2t 2}
    \partial_s^2 \phi_\alpha(s_0,t_0) = \left \langle \frac{D}{ds} \tgamma_\alpha'(s_0), \partial_r \zeta(s_0, r_0) \right \rangle + \kappa(s_0,r_0) \left \langle \tgamma_\alpha'(s_0), \partial_s \zeta(s_0,r_0) \right \rangle.
\end{equation}
Let $\theta \in [0, \pi/2]$ denotes the angle of intersection between the curve $\tgamma_\alpha$ and the circle $s \mapsto \zeta(s,r_0)$. We have
\[
    \tgamma_\alpha'(s_0) = \left. \frac{\partial}{\partial s} \right|_{s = s_0} \zeta(s,\phi(s,t_0)) = \partial_s \zeta(s_0,r_0) + \partial_s \phi(s_0,t_0) \partial_r \zeta(s_0,r_0),
\]
and since $\partial_r \zeta$ and $\partial_s \zeta$ are perpendicular,
\[
    \langle \tgamma_\alpha'(s_0) , \partial_s \zeta(s_0,r_0) \rangle = |\partial_s \zeta(s_0,r_0)|^2 = \cos^2(\theta).
\]
The line above and \eqref{2t 2} yields
\begin{equation} \label{pure second}
    \partial_s^2 \phi_\alpha(s_0,t_0) = \cos(\theta) ( \pm \kappa_{\gamma}(s_0) + \cos(\theta) \kappa(s_0,r_0) ),
\end{equation}
where $\pm$ matches the sign of $\langle \frac{D}{ds} \tgamma_\alpha', \partial_r \zeta \rangle$.

\newsection{Curvature of circles}\label{CURVATURE OF CIRCLES}

Fix $t_0$ and let $\zeta$ and $\kappa$ be as in \eqref{def kappa}. To apply \eqref{pure second} in any useful way, we need to know something about the function $\kappa(s,r)$, the curvature of a geodesic circle of radius $r$ centered $\tgamma(t_0)$. Note by the same argument for \eqref{st commute},
\[
    \frac{D}{ds} \partial_r \zeta = \frac{D}{dr} \partial_s \zeta.
\]
This and \eqref{def kappa} yields
\[
    \frac{D^2}{dr^2} \partial_s \zeta = \frac{D}{dr} \left( \kappa \partial_s \zeta \right) = \partial_r \kappa \partial_s \zeta + \kappa \frac{D}{dr} \partial_s \zeta =  (\partial_r \kappa + \kappa^2) \partial_s \zeta.
\]
On the other hand since $\partial_s \zeta$ is a perpendicular Jacobi field along $r \mapsto \zeta(s,r)$,
\[
    \frac{D^2}{dr^2} \partial_s \zeta = -K \partial_s.
\]
Putting these together, we obtain a simple equation for $\kappa$,
\begin{equation} \label{kappa ode}
    \partial_r \kappa + \kappa^2 + K = 0.
\end{equation}

We want to compare the behaviors of $\kappa$ and the quantity $\bfk$, but first we must verify Definition \ref{def k}.

\begin{proposition}\label{verify def k}
    $\bfk(v)$ as given in Definition \ref{def k} exists and is unique for each $v$. Moreover $\bfk$ is a continuous, non-negative function on the unit sphere bundle $SM$.
\end{proposition}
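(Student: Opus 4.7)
My plan is to identify $\bfk(v)$ with the limit as $R \to \infty$ of the geodesic curvature at $\zeta(0)$ of the circle of radius $R$ centered at $\zeta(-R)$, and to deduce the three claims from an analysis of the scalar Jacobi ODE along $\zeta$. For $v \in SM$ with geodesic $\zeta$, the perpendicular Jacobi field $J$ with $|J(0)|=1$ can be written $J(r) = h(r) E(r)$ with $E$ the parallel transport of $J(0)$; since $\dim M = 2$, the Jacobi equation reduces to
\[
h''(r) + K(\zeta(r))\, h(r) = 0, \qquad h(0) = 1, \qquad h'(0) = \bfk(v),
\]
and the task is to show there is a unique $\bfk(v)$ for which $h$ stays bounded on $r \leq 0$.

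\emph{Existence of the limit.} For each $R > 0$, let $h_R$ be the solution with $h_R(-R) = 0$ and $h_R(0) = 1$; this exists since there are no conjugate points in nonpositive curvature. Put $k_R(v) := h_R'(0)$, which by \eqref{def kappa} is exactly the geodesic curvature at $\zeta(0)$ of the circle of radius $R$ centered at $\zeta(-R)$. The relation $h_R'' = -K h_R \geq 0$, combined with the absence of a second zero on $(-R, 0]$, makes $h_R$ convex and positive on $(-R, 0]$, so the chord inequality yields
\[
0 \leq h_R(r) \leq 1 + r/R \quad \text{on } [-R,0], \qquad k_R(v) \geq 1/R > 0.
\]
The Riccati equation $u' + u^2 + K = 0$ for $u_R := h_R'/h_R$, together with $u_R(r) \to +\infty$ as $r \to -R^+$ and uniqueness for the Riccati ODE, forces $k_R(v)$ to be monotone decreasing in $R$. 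Continuity of $k_1$ on the compact space $SM$ provides a uniform upper bound on $k_R$ for $R \geq 1$, so $\bfk(v) := \lim_{R \to \infty} k_R(v)$ exists and is uniformly bounded. Continuous dependence on initial data then gives $h_R \to h_\infty$ uniformly on compact subsets of $(-\infty, 0]$, with $h_\infty(0) = 1$, $h_\infty'(0) = \bfk(v)$, and $h_\infty \in [0,1]$ on $r \leq 0$ by passing to the limit in the chord bound.

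\emph{Uniqueness, non-negativity, and continuity.} Any other solution $\tilde h$ with $\tilde h(0) = 1$ bounded on $r \leq 0$ satisfies $\tilde h - h_\infty = c J_0$, where $J_0$ is the Jacobi solution with $J_0(0) = 0$, $J_0'(0) = 1$; Rauch comparison with flat space gives $|J_0(r)| \geq |r|$, so if $c \neq 0$ then $\tilde h$ would be unbounded on $r \leq 0$, a contradiction. Non-negativity is immediate from $\bfk = \inf_R k_R \geq 0$. For continuity, let $v_n \to v$ in $SM$; then $\bfk(v_n)$ is bounded, and along any convergent subsequence with $\bfk(v_n) \to c^*$ the bounded solutions $h_\infty(\cdot, v_n) \in [0,1]$ converge, by continuous dependence, to the solution $h^*$ along $\zeta_v$ with $h^*(0) = 1$, $(h^*)'(0) = c^*$, and $h^* \in [0,1]$; uniqueness forces $h^* = h_\infty(\cdot, v)$, so $c^* = \bfk(v)$, and hence $\bfk(v_n) \to \bfk(v)$.

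The main subtlety will be proving monotonicity of $k_R$ via the Riccati comparison while simultaneously controlling the limiting solution---the uniform chord bound $h_R \in [0,1]$ is exactly what lets us pass to the limit in $R$ and run the continuity argument without needing any pointwise control of $h_R$ as $r \to -\infty$.
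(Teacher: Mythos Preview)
Your argument is correct and reaches the same conclusion, but by a genuinely different route than the paper. Both proofs reduce to the scalar Jacobi equation and introduce the family $h_R$ (the paper's $h_s$) with $h_R(-R)=0$, $h_R(0)=1$. From there the paper obtains convergence by estimating the parameter derivative $\partial_s h_s$ directly, proving $|\partial_s h_s(r)| \le |r|/s^2$ via convexity and integrating to get a quantitative rate $|h_\infty(r)-h_{s_0}(r)| \le |r|/s_0$; continuity is then an explicit $\epsilon/3$ argument built on this rate, and non-negativity requires a separate convexity step. You instead work through the Riccati quantity $u_R = h_R'/h_R$: the blow-up of $u_R$ at $-R^+$ together with uniqueness for the first-order ODE gives monotonicity of $k_R(v)$ in $R$ immediately, so existence of the limit and non-negativity ($\bfk = \inf_R k_R \ge 0$) come for free; uniqueness via Rauch on $J_0$ and continuity via the subsequence-plus-uniqueness trick are then soft arguments. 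The paper's approach yields a quantitative convergence rate that is not strictly needed later (Lemma~\ref{large radius} is proved independently via the Riccati equation anyway), while your approach is more streamlined and leans on standard comparison-geometry tools; in fact your Riccati monotonicity is essentially the same mechanism the paper invokes downstream for Lemma~\ref{large radius}, so your proof anticipates that step.
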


\begin{proof} 
   We use the notation in Definition \ref{def k}. We first observe that $\mathbf k(v)$ does not depend on our choice of $J(0)$. The only other Jacobi field satisfying \eqref{J initial condition} and \eqref{J bounded} is $-J$, which would yield the same value of $\mathbf k(v)$ in \eqref{J' initial condition}. Granted, this holds if for a fixed choice of $J(0)$, there is exactly one value for $\frac{D}{dr} J(0)$ such that $J$ satisfies \eqref{J bounded}. We prove this now.

    Since both $J(0)$ and $\frac{D}{dr} J(0)$ are perpendicular to $\zeta'(0)$,
    \[
        \langle J(r), \zeta'(r) \rangle = 0 \quad \text{ for all } r.
    \]
    Hence, if $w(r)$ denotes the vector at $\zeta(r)$ obtained through a parallel transport of $J(0)$ along $\zeta$, we write
    \[
        J(r) = h(r) w(r)
    \]
    for some smooth function $h$ satisfying
    \begin{align}
        \label{h jacobi equation} h'' + Kh &= 0 \quad \text{ and }\\
        \label{h initial condition 0} h(0) &= 1,
    \end{align}
    where $K = K(\zeta(r))$ is the sectional curvature at $\zeta(r)$. We have reduced the problem to proving that there exists a unique function $h$ satisfying \eqref{h jacobi equation} and \eqref{h initial condition 0} and also
    \begin{equation} \label{h bounded}
        |h(r)| \leq C \quad \text{ for } r \leq 0,
    \end{equation}
    and then setting $\mathbf k(v) = h'(0)$. We begin with uniqueness. Suppose $h_1$ and $h_2$ both satisfy \eqref{h jacobi equation}, \eqref{h initial condition 0}, and \eqref{h bounded}. Then the difference $u = h_1 - h_2$ satisfies \eqref{h jacobi equation} and \eqref{h bounded} with initial data $u(0) = 0$. If $u'(0) > 0$, then $u(r) > 0$ for all $r > 0$, otherwise we would have a conjugate point. Hence, $u'' \geq 0$, and so $u(r) \geq ru'(0)$, a contradiction. We derive a similar contradiction if $u'(0) < 0$ and conclude $u'(0) = 0$.

    To prove existence, we construct a bounded $h$ as a limit. For all $s > 0$, let $h_{s}$ denote the unique function satisfying \eqref{h jacobi equation}, \eqref{h initial condition 0}, and $h_s(-s) = 0$. We construct as a limit
    \begin{equation} \label{h limit}
        h_{-\infty} = \lim_{s \to \infty} h_s = h_{1} + \int_1^\infty \partial_s h_s \, ds
    \end{equation}
    which we will show converges uniformly on compact sets. We then have smooth convergence by \eqref{h jacobi equation}. Hence, $h_{\infty}$ satisfies \eqref{h jacobi equation} and \eqref{h initial condition 0}. It will then be left to show that $h_{\infty}$ satisfies \eqref{h bounded}. To prove convergence, we first show
    \begin{equation} \label{h t derivative bound}
        |\partial_s h_s(r)| \leq -\frac{r}{s^2} \quad \text{ for } r \leq 0.
    \end{equation}
    Now $h_s$ may only vanish at $-s$, otherwise we have conjugate points. Hence, $h_s > 0$ on $(-s,0]$, and so $h_s'' \geq 0$ on $[-s,0]$. Since $h_s(0) = 1$ and $h_s(-s) = 0$,
    \[
        0 \leq h_s(r) \leq \left( 1 + \frac{r}{s} \right) \quad \text{ for } -s \leq r \leq 0
    \]
    by convexity. We conclude that
    \[
        0 \leq h_s'(-s) \leq \frac{1}{s}
    \]
    by writing $h_s'(-s)$ as a difference quotient and applying the previous inequality. Now since $h_s(-s) = 0$ for all $s > 0$,
    \[
        0 = \frac{d}{ds} h_s(-s) = -h_s'(-s) + \partial_s h_s(-s).
    \]
    Hence,
    \begin{equation} \label{dt ht endpoint bound}
        0 \leq \partial_s h_s(-s) \leq \frac{1}{s}.
    \end{equation}
    Now $\partial_s h_s$ also satisfies \eqref{h jacobi equation} with initial data $\partial_s h_s(0) = 0$. Since $\partial_s h_s(-s) > 0$, a similar convexity argument as before yields bounds
    \[
        0 < \partial_s h_s(r) \leq -\partial_s h_s(-s) \frac{r}{s} \quad \text{ for } -s \leq r < 0.
    \]
    \eqref{h t derivative bound} follows from the above inequality and \eqref{dt ht endpoint bound}. The bound \eqref{h t derivative bound} implies the pointwise convergence of the limit \eqref{h limit}. Moreover if we fix $s_0 > 0$, for $r \in [-s_0,0]$ we have
    \begin{equation}\label{h error}
        |h_\infty(r) - h_{s_0}(r)| = \left| \int_{s_0}^\infty \partial_s h_s(r) \, ds \right| \leq -\int_{s_0}^\infty \frac{r}{s^2} \, ds = -\frac{r}{s_0}.
    \end{equation}
    This implies uniform convergence on compact sets. Similarly,
    \[
        h_\infty(-s_0) = \int_{s_0}^\infty \partial_s h_s(-s_0) \, ds,
    \]
    which together with \eqref{h t derivative bound} implies
    \begin{equation} \label{true h bounds}
        0 < h_\infty(-s_0) \leq 1 \quad \text{ for } s_0 > 0.
    \end{equation}
    which is stronger than \eqref{h bounded}.
    This completes the proof of existence.

    To show that $\mathbf k(v)$ is non-negative, we argue that $h_\infty'(0) \leq 0$. By \eqref{true h bounds}, $h_\infty(r)$ does not vanish for $r > 0$, and so $h_\infty''(r) \geq 0$ for $r \geq 0$. However if at the same time $h_\infty'(0) > 0$, $h_\infty$ would certainly be unbounded on $[0,\infty)$. Hence $h_\infty'(0) \leq 0$ as desired.

    Finally, we show $\mathbf k$ is continuous on $SM$. To do so, we show that $\mathbf k$ is continuous on every continuous path $t \mapsto v(t)$ in $SM$. If $r \mapsto \zeta(t,r)$ is the geodesic with $\partial_r \zeta(t,0) = v(t)$, we let $h_\infty(t,r)$ and $h_s(t,r)$ be as constructed above along the geodesic $r \mapsto \zeta(t,r)$. Now in the limit as $t \to 0$, the sectional curvature $K(\zeta(t,r))$ converges to $K(\zeta(0,r))$ uniformly for $r$ in a compact set. Combined with \eqref{h jacobi equation}, we have for any $\epsilon > 0$ and $s > 0$ a $\delta > 0$ such that
    \[
        |h_s(t,r) - h_s(0,r)| < \frac{\epsilon}{3}  \quad \text{ for } -s \leq r \leq 0
    \]
    if $|t| < \delta$. Moreover if $r$ lies in some compact set, by \eqref{h error} there exists $s > 0$ large enough such that
    \[
        |h_\infty(t,r) - h_s(t,r) | < \frac{\epsilon}{3}
    \]
    independently of $t$. Putting these bounds together, we have
    \begin{align*}
        |h_\infty(t,r) - &h_\infty(0,r)|\\
        &\leq |h_\infty(t,r) - h_s(t,r)|  + |h_s(t,r) - h_s(0,r)|  + |h_s(0,r) - h_\infty(0,r)| < \epsilon,
    \end{align*}
    i.e. $h_\infty(t,r) \to h_\infty(0,r)$ uniformly for $r$ in a compact set. By \eqref{h jacobi equation}, $\partial_r^2 h_\infty(t,r) \to \partial_r^2 h_\infty(0,r)$ uniformly for $r$ in a compact set. Hence, $\partial_r h(t,r) \to \partial_r h(0,r)$ as $t \to 0$, and in particular $\mathbf k(v(t)) \to \mathbf k(v(0))$.
\end{proof}

\begin{remark} \label{k lift}
    Let $\tilde \bfk$ be given by $\tilde \bfk(\tilde v) = \bfk(v)$ where $\tilde v$ is a lift of $v$ to $S\R^2$. Since the covering map is a local isomorphism, $\tilde \bfk$ satisfies Definition \ref{def k} on the manifold $(\R^2,\tilde g)$. From now on we will work exclusively in the universal cover, noting that $\bfk$ in the hypotheses \eqref{main theorem hypotheses} can be freely replaced with $\tilde \bfk$.
\end{remark}

We can loosen Definition \ref{def k} a little bit. If $v$, $\zeta$, and $J$ are as in Definition \ref{def k} (here we replace the manifold $M$ in the definition with the universal cover as justified by the above remark), except that $|J(0)|$ is allowed to take any value except $0$, we may write
\[
    \tilde \bfk(v) = \frac{|\frac{D}{dr} J(0)|}{|J(0)|}.
\]
Then,
\[
    \tilde \bfk(\zeta'(r)) = \frac{|\frac{D}{dr} J(r)|}{|J(r)|} = \frac{h'(r)}{h(r)} \qquad \text{ for all } r \geq 0
\]
using $h$ as in the proof of Proposition \ref{verify def k}. It follows that
\begin{equation} \label{k ode}
    \frac{d}{dr} \tilde \bfk(\zeta'(r)) = \frac{h''(r)}{h(r)} - \frac{h'(r)^2}{h(r)^2} = -K - \tilde \bfk(\zeta'(r))^2,
\end{equation}
and hence $\tilde \bfk(\zeta'(r))$ satisfies the same ordinary differential equation \eqref{kappa ode} as $\kappa$. As a consequence, we have the following lemma.

\begin{lemma} \label{large radius}
    Let $r \mapsto \zeta(r)$ be a unit-speed geodesic in $(\R^2,\tilde g)$ and $\kappa(r)$ the geodesic curvature at $\zeta(r)$ of the circle of radius $r$ with center at $\zeta(0)$. Then,
    \[
        0 < \kappa(r) - \tilde \bfk(\partial_r \zeta(r)) \leq r^{-1}, \qquad r > 1.
    \]
\end{lemma}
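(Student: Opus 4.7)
My plan is to realize both $\kappa(r)$ and $\tilde\bfk(\partial_r\zeta(r))$ as logarithmic derivatives $h'/h$ of Jacobi fields along $\zeta$, and then exploit the fact that the Wronskian of any two solutions of the scalar Jacobi equation is constant. Let $h_c$ denote the Jacobi field along $\zeta$ with $h_c(0)=0$ and $h_c'(0)=1$; the calculation leading to \eqref{def kappa} together with $|\partial_s\zeta(s_0,r)| = h_c(r)$ identifies $\kappa(r) = h_c'(r)/h_c(r)$. Let $h_\infty$ be the bounded-for-$r\leq 0$ Jacobi field constructed in the proof of Proposition~\ref{verify def k}, normalized so that $h_\infty(0)=1$; the identity displayed just before the lemma then gives $\tilde\bfk(\partial_r\zeta(r)) = h_\infty'(r)/h_\infty(r)$ for every $r$. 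Both $h_c$ and $h_\infty$ solve the same linear ODE $h'' + K(\zeta(r))h = 0$, so their Wronskian $W = h_c' h_\infty - h_c h_\infty'$ is constant, and evaluation at $r=0$ gives $W\equiv 1$.

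Subtracting the two logarithmic derivatives then yields the key identity
\[
    \kappa(r) - \tilde\bfk(\partial_r\zeta(r)) = \frac{h_c'(r) h_\infty(r) - h_c(r) h_\infty'(r)}{h_c(r) h_\infty(r)} = \frac{1}{h_c(r) h_\infty(r)},
\]
so the lemma reduces to proving the single inequality $h_c(r)h_\infty(r) \geq r$ for $r>1$, together with positivity of the product on $(0,\infty)$.

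I would handle each factor separately by a convexity argument using $K \leq 0$. For $h_c$: the Jacobi equation gives $h_c'' = -K h_c \geq 0$ on $[0,\infty)$ (where $h_c \geq 0$), so $h_c' \geq h_c'(0) = 1$, and hence $h_c(r) \geq r$ for $r \geq 0$. For $h_\infty$: by Proposition~\ref{verify def k} we have $h_\infty'(0) = \tilde\bfk(\zeta'(0)) \geq 0$, while $h_\infty(0)=1>0$; a brief bootstrap shows $h_\infty$ remains positive on $[0,\infty)$, because on any interval where $h_\infty \geq 0$ one has $h_\infty'' \geq 0$, so $h_\infty'$ cannot drop below $\tilde\bfk(\zeta'(0))\geq 0$, and $h_\infty$ is non-decreasing there. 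This convexity then gives $h_\infty(r) \geq h_\infty(0) = 1$ for $r\geq 0$. Multiplying the two factor bounds yields $h_c(r)h_\infty(r)\geq r$, and positivity of $\kappa - \tilde\bfk$ is automatic from positivity of $h_c$ and $h_\infty$ on $(0,\infty)$.

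The argument is essentially routine once the Wronskian identity is in hand; the only point I expect to require a moment of care is the forward positivity of $h_\infty$ on $[0,\infty)$ that underlies the second convexity estimate, and this is dispatched by the bootstrap above. I do not anticipate any substantive obstacle.
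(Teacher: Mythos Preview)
Your argument is correct and is a genuinely different route from the paper's. The paper works at the Riccati level: having shown that both $\kappa$ and $\tilde\bfk$ satisfy $\partial_r u + u^2 + K = 0$ along $\zeta$, it subtracts to get $\partial_r(\kappa-\tilde\bfk) = -(\kappa+\tilde\bfk)(\kappa-\tilde\bfk)$, argues positivity of the difference from the blow-up of $\kappa$ near $r=0$ and the fact that $0$ is an equilibrium, and then uses $\kappa+\tilde\bfk \ge \kappa-\tilde\bfk$ to compare with the ODE $u'=-u^2$ whose solution is $r^{-1}$. Your approach instead stays at the linear level: you identify $\kappa$ and $\tilde\bfk$ as logarithmic derivatives of two Jacobi solutions $h_c$ and $h_\infty$, observe that their Wronskian is identically $1$, and obtain the closed form $\kappa-\tilde\bfk = (h_c h_\infty)^{-1}$; the inequality then follows from the elementary convexity bounds $h_c(r)\ge r$ and $h_\infty(r)\ge 1$.

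What each buys: your Wronskian identity gives an exact expression for the difference rather than a differential inequality, and it sidesteps the small-$r$ asymptotic used in the paper to seed the positivity. The paper's Riccati comparison, on the other hand, is phrased entirely in terms of the curvature functions themselves and does not need the forward-in-$r$ positivity and monotonicity of $h_\infty$ that your bootstrap supplies. Both arguments are short; yours is arguably the more transparent of the two.
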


\begin{proof}
    Since both $\kappa$ and $\tilde \bfk$ satisfy \eqref{kappa ode}, the difference $\kappa - \tilde \bfk$ satisfies
    \begin{equation} \label{difference ode}
        \partial_r (\kappa - \tilde \bfk) = -(\kappa^2 - \tilde \bfk^2).
    \end{equation}
    Since $\kappa(r)$ is large for small $r$, we can easily guarantee that $\kappa(r_0) > \tilde \bfk(\zeta'(r))$ for $0 < r \ll 1$. Now $\kappa$ and $\tilde \bfk$ are smooth for $r > 0$, and since $\kappa - \tilde \bfk = 0$ is an equilibrium of \eqref{difference ode}, we have that
    \[
        \kappa(r) - \tilde \bfk(\zeta'(r)) > 0 \qquad \text{ for all } r > 0.
    \]
    Hence
    \[
        \partial_r (\kappa - \tilde \bfk) = -\frac{\kappa + \tilde \bfk}{\kappa - \tilde \bfk}(\kappa - \tilde \bfk)^2 \leq -(\kappa - \tilde \bfk)^2,
    \]
    the inequality a consequence of the fact that $\kappa > \tilde \bfk$. By comparison,
    \[
        \kappa(r) - \tilde \bfk(\zeta'(r)) \leq r^{-1},
    \]
    as desired.
\end{proof}

\newsection{Conclusion of the proof of Theorem \ref{main theorem}}

All that is left is to prove Propositions \ref{local bound} and \ref{global bound}

\begin{proof}[Proof of Proposition \ref{local bound}]
    Since $A$ is fixed and finite, we may restrict the support of $b$ without worrying about doing so uniformly over elements of $A$. Fix $\alpha \in A \setminus I$. Let $D$ denote the diagonal of $\supp b \times \supp b$. We claim that that
    \begin{equation} \label{pf medium time 1}
        D \subset \{ \partial_t^2 \phi_\alpha \neq 0\} \cup \{ \partial_s^2 \phi_\alpha \neq 0\} \cup \{ \nabla \phi_\alpha \neq 0 \}
    \end{equation}
    Provided our claim is true, we restrict the support of $b$ by a fine enough partition of unity so that at least one of the conditions $\partial_t^2 \phi_\alpha \neq 0$, $\partial_s^2 \phi_\alpha \neq 0$, or $\nabla \phi_\alpha \neq 0$ holds on all of $\supp b \times \supp b$. In the first case, the proposition follows by stationary phase ~\cite[Theorem 1.1.1]{FIOs} in $t$, and similarly for the second case. In the third case, the proposition follows by nonstationary phase ~\cite[Lemma 0.4.7]{FIOs}.
    
    Fix $s_0 = t_0 \in \supp b$ and suppose $\nabla \phi_\alpha(s_0,t_0) = 0$. To prove \eqref{pf medium time 1}, we need only show that either $\partial_s^2 \phi_\alpha(s_0,t_0) \neq 0$ or $\partial_t^2 \phi_\alpha(s_0,t_0) \neq 0$. Let $r \mapsto \zeta(r)$ be the constant-speed geodesic with $\zeta(0) = \tgamma(t_0)$ and $\zeta(1) = \tgamma_\alpha(s_0)$. By the computation \eqref{pure second},
    \[
        \partial_s^2 \phi_\alpha(s_0,t_0) = \pm \kappa_\gamma(s_0) + \kappa(s_0,\phi_\alpha(s_0,t_0))
    \]
    where $\pm$ agrees with the sign of $\langle \zeta'(1), D/ds \tgamma_\alpha(s_0) \rangle$. If $\langle \zeta'(1), D/ds \tgamma_\alpha'(s_0) \rangle \geq 0$, we are done since $\kappa(s_0,\phi_\alpha(s_0,t_0))$ is positive. If not, we will prove that
    \[
        \langle -\zeta'(0), D/dt \tgamma'(t_0) \rangle \geq 0,
    \]
    which yields $\partial_t^2 \phi_\alpha(s_0,t_0) > 0$ from a similar computation as \eqref{pure second} in $t$. Since $\alpha$ is an isometry,
    \begin{align*}
        \left\langle -\zeta'(0), \frac{D}{dt} \tgamma'(t_0) \right\rangle = -\left\langle \alpha_* \zeta'(0), \alpha_* \frac{D}{dt} \tgamma'(t_0) \right\rangle = -\left\langle \alpha_* \zeta'(0), \frac{D}{ds} \tgamma_\alpha'(s_0) \right\rangle.
    \end{align*}
    We claim that
    \[
        \zeta'(1) = \alpha_* \zeta'(0).
    \]
    As noted earlier, $\zeta$ is perpendicular to both $\gamma$ and $\gamma_\alpha$ since $\nabla \phi_\alpha(s_0,t_0) = 0$. Since $\alpha_*(\tgamma'(t_0)) = \tgamma_\alpha'(s_0)$, we need only rule out the possibility that $-\zeta'(1) = \alpha_* \zeta'(0)$. If this were the case, however, $\zeta(1/2) = \alpha \circ \zeta(1/2)$ by uniqueness, contradicting the fact that $\alpha$ is a deck transformation. Hence,
    \[
        -\left\langle \alpha_* \zeta'(0), \frac{D}{ds} \tgamma_\alpha'(s_0) \right\rangle = -\left\langle \zeta'(1), \frac{D}{ds} \tgamma_\alpha'(s_0) \right\rangle > 0,
    \]
    as desired.
\end{proof}

\begin{proof}[Proof of Proposition \ref{global bound}]
    By our hypothesis \eqref{main theorem hypotheses} on the curvature of $\gamma$, and since $\bfk$ is continuous, we restrict the support of $b$ so that
    \[
        \inf_{s,t \in \supp b} |\kappa_\gamma(t) - \bfk(\pm \gamma'^\perp(s))| > 2\epsilon
    \]
    for some small $\epsilon > 0$. Let $\zeta$ be defined as in Section \ref{PHASE FUNCTION BOUNDS}, that is let $r \mapsto \zeta(s,t,r)$ be the unit-speed geodesic with $\zeta(s,t,0) = \tgamma(t)$ and $\zeta(s,t,\phi_\alpha(s,t)) = \tgamma_\alpha(s)$. Moreover let $\kappa(s,t)$ denote the curvature at $\tgamma_\alpha(s)$ of the circle with center $\tgamma(t)$ and radius $\phi_\alpha(s,t)$. We set $R = 2\epsilon^{-1}$ in \eqref{def A}. Since $R > \epsilon$, Lemma \ref{large radius} tells us
    \[
        |\kappa(s,t) - \tilde \bfk(\zeta'(s,t,\phi_\alpha(s,t)))| < \epsilon,
    \]
    and hence
    \begin{equation} \label{kappa difference}
        |\kappa(s,t) - \kappa_{\gamma_\alpha}(t)| > \epsilon \qquad \text{ for } s,t \in \mathcal I.
    \end{equation}
    We claim that the determinant of the Hessian of $\phi_\alpha$ is nonzero at critical points of $\phi$. It follows that, for each $\alpha \in \Gamma \setminus A$, $\supp b \times \supp b$ contains finitely many stationary points of $\phi_\alpha$, all of which are non-degenerate. The desired bound follows by stationary phase ~\cite[Theorem 1.1.4]{FIOs}.
    
    Suppose $\nabla \phi_\alpha(s,t) = 0$ at some point $(s,t) \in \supp b \times \supp b$. By the bound \eqref{mixed bound} and our assertion that $R = 2\epsilon^{-1}$, we have
    \[
        |\partial_t \partial_s \phi_\alpha(s,t)| \leq \epsilon/2.
    \]
    Moreover, by \eqref{kappa difference} and the computation \eqref{pure second}, we have
    \[
        |\partial_s^2 \phi_\alpha(s,t)| \geq \epsilon \quad \text{ and } \quad |\partial_t^2 \phi_\alpha(s,t)| \geq \epsilon.
    \]
    Hence, the determinant of the hessian $|\det \phi''(s,t)|$ is bounded by
    \begin{align*}
        |\det \phi''(s,t)| &= |(\partial_s^2 \phi_\alpha(s,t))(\partial_t^2 \phi_\alpha(s,t)) - (\partial_s \partial_t \phi_\alpha(s,t))^2|\\
        &\geq |\partial_s^2 \phi_\alpha(s,t)||\partial_t^2 \phi_\alpha(s,t)| - |\partial_s \partial_t \phi_\alpha(s,t)|^2\\
        &\geq \frac{3}{4}\epsilon^2,
    \end{align*}
    which proves our claim.
\end{proof}


\end{document}